\documentclass{amsart}
\usepackage{amsmath,amssymb,mathtools}
\usepackage[colorlinks=true,linkcolor=blue,citecolor=blue,urlcolor=blue]{hyperref}

\newtheorem{theorem}{Theorem}[section]
\newtheorem{lemma}[theorem]{Lemma}
\newtheorem{proposition}[theorem]{Proposition}

\theoremstyle{remark}
\newtheorem{remark}[theorem]{Remark}

\DeclareMathOperator{\SL}{SL}
\DeclareMathOperator{\Span}{span}
\DeclareMathOperator{\rank}{rank}
\newcommand{\Z}{\mathbb Z}
\newcommand{\R}{\mathbb R}
\newcommand{\C}{\mathbb C}
\newcommand{\HH}{\mathbb H}
\newcommand{\vt}{\vartheta}

\title[Reverse Minkowski for Gaussian Mass of Integral Unimodular Lattices]
{A Sharp Reverse Minkowski Inequality for the Gaussian Mass of Integral Unimodular Lattices Through Rank $32$}

\author[Scott Duke Kominers]{Scott Duke Kominers}

\address{Harvard Business School; Department of Economics and Center of
Mathematical Sciences and Applications, Harvard University; and a16z crypto}

\email{kominers@fas.harvard.edu}

\thanks{I used LLMs to assist with computations, analysis, and synthesis in
the preparation of this article, particularly GPT-5.5 Pro and Claude 4.7 Opus
(accessed in part via Poe with the support of Quora, where I am an advisor).
I particularly appreciate a thorough review from Refine.ink. The problem,
methods, and eventual written form are my own; and of course any errors remain
my responsibility. This work was conducted while I was visiting the
Technological Innovation, Entrepreneurship, and Strategic Management (TIES)
Group at the MIT Sloan School of Management; I greatly appreciate their
hospitality.}

\begin{document}

\begin{abstract}
The integer lattice $\Z^n$ is conjectured to maximize the Gaussian mass
$\Theta_L(t)=\sum_{x\in L}e^{-t\|x\|^2}$ over the set of stable lattices
in~$\R^n$, for every $t>0$.
We prove this sharp inequality for every integral unimodular lattice $L$ of
rank $n\leq 32$, with equality only at $L\cong\Z^n$, and furthermore obtain
the strict inequality for every even unimodular lattice of rank~$40$.
The proof does not use the classification of unimodular lattices in these
ranks; rather, it parametrizes integral unimodular theta series as
polynomials in the modular function $u=\Delta_8/\vt_3^8\in(0,1/64]$, with
the few coefficients that arise controlled by norm-$1$ splitting, ADE root
counts, and shadow positivity.
\end{abstract}

\maketitle

\section{Introduction}\label{sec:intro}

The \emph{Gaussian mass} of a full-rank lattice $\Lambda\subset \R^n$ is
\[
  \Theta_\Lambda(t)=\sum_{x\in\Lambda} e^{-t\|x\|^2},\qquad t>0.
\]
A \emph{reverse Minkowski inequality} goes in the opposite direction to
Minkowski's classical bound: where Minkowski's theorem guarantees that a
lattice of small covolume contains short nonzero vectors, a reverse
Minkowski inequality upper-bounds a measure of how concentrated a lattice can
be near the origin---here, the Gaussian mass $\Theta_\Lambda(t)$---under an
appropriate normalization on the lattice.

A unit-covolume lattice is called \emph{stable} if every nonzero sublattice
has covolume at least $1$ in its linear span; this notion was introduced by
Stuhler \cite{Stuhler1976} and refined by Grayson \cite{Grayson1984} as a
lattice-theoretic analogue of semistability for vector bundles (for a modern
exposition, see \cite{Casselman}).
Dadush conjectured a nearly-sharp reverse Minkowski inequality for stable
lattices, which Regev and Stephens-Davidowitz \cite{RSD} proved (see also
\cite{DadushRegev}), building on the lattice Gaussian-mass technology of
Banaszczyk~\cite{Banaszczyk}.
In the course of their proof, Regev and Stephens-Davidowitz also established
the sharp pointwise inequality
$\Theta_\Lambda(t)\leq\Theta_{\Z^n}(t)$ for stable $\Lambda$ at sufficiently
extreme parameters~$t$.
The sharp form
\begin{equation}\label{eq:RSD-conjecture}
  \Theta_\Lambda(t)\leq \Theta_{\Z^n}(t)
\end{equation}
for every stable $\Lambda\subset\R^n$ and every $t>0$---equivalently, the
assertion that the integer lattice maximizes Gaussian mass over stable
lattices at every scale---remains open in general.
Eisenberg, Regev, and Stephens-Davidowitz \cite{ERSD} established the
corresponding sharp inequality for a family of Epstein-zeta functions,
building on Sarnak and Str\"ombergsson's work \cite{SarnakStrombergsson} on
local extrema of the Epstein zeta function and heights of flat tori.

In this paper, we show the inequality \eqref{eq:RSD-conjecture} for every
integral unimodular lattice $L$ of rank $n\leq 32$, with equality only at
$L\cong\Z^n$, as well as for every even unimodular lattice $L$ of rank $40$.
(Recall that a lattice $L$ is \emph{even} if $\|x\|^2\in 2\Z$ for every
$x\in L$, and \emph{odd} otherwise.)
Every positive-definite integral unimodular lattice $L$ is stable.
Indeed, such $L$ have covolume $1$ since they are unimodular, and if
$M\subset L$ is a nonzero sublattice, then the square of the covolume of $M$
in $\Span_{\R}(M)$ is the determinant of an integral positive-definite Gram
matrix, and hence is a positive integer.
Thus the following theorem gives a family of sharp cases of Regev and
Stephens-Davidowitz's conjecture.

\begin{theorem}\label{thm:main}
Let $L$ be a positive-definite integral unimodular lattice of rank
$n\leq 32$.
Then, for every $t>0$,
\[
  \Theta_L(t)\leq \Theta_{\Z^n}(t).
\]
Equality for a single $t>0$ occurs if and only if $L\cong\Z^n$; in that
case, equality holds for every $t>0$.
In addition, if $L$ is even unimodular of rank $40$, then
\[
  \Theta_L(t)<\Theta_{\Z^{40}}(t)\qquad(t>0).
\]
\end{theorem}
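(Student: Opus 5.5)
The plan is to work with the variable $q=e^{-\pi\tau}$-type modular parametrization. Set $\Theta_L(t)=\vt_L(iy)$ with $q=e^{-t}$, i.e.\ $\vt_L(\tau)=\sum_{x\in L}q^{\|x\|^2}$ with $q=e^{\pi i\tau}$.

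First, recall the classical structure theorem for theta series of odd (integral) unimodular lattices. Namely, $\vt_L$ is a modular form of weight $n/2$ for the theta group. Hence it can be written as
\[
\vt_L=\sum_{j=0}^{\lfloor n/8\rfloor} a_j\,\vt_3^{\,n-8j}\Delta_8^{\,j},
\qquad \Delta_8=\tfrac1{16}\vt_2^4\vt_4^4 .
\]
Dividing by $\vt_3^n=\vt_{\Z^n}$ gives
\[
\frac{\Theta_L(t)}{\Theta_{\Z^n}(t)}=P_L(u)=\sum_j a_j u^j,
\qquad u=\Delta_8/\vt_3^8 .
\]
On the imaginary axis, $u$ ranges over $(0,1/64]$: it equals $1/64$ at the self-dual point $t=\pi$, and by the $\tau\mapsto-1/\tau$ symmetry each value is taken twice. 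We have $a_0=1$, so the theorem reduces to showing $P_L(u)\le 1$ on $(0,1/64]$, with equality only if all $a_j=0$ for $j\ge1$. For even lattices we use the same expansion; when $8\mid n$ it is legitimate because the theta group forms include $E_4$-type forms, expressible via $\vt_3^8$ and $\Delta_8$.

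Second, determine the coefficients $a_j$ from low-order data of $L$. The $q$-expansion gives $u=q+O(q^2)$, so the $a_j$ are linear in the counts $N_1,N_2,\dots$ of vectors of norm $1,2,\dots$. For $n\le 32$ only $j\le 4$ occurs. Split off norm-1 vectors, $L=\Z^k\oplus L_0$ with $\min L_0\ge2$; this replaces $n$ by $n-k$ and only rescales the problem. Then:
\begin{itemize}
\item $a_1=N_1-2n\le 0$, with equality iff $L=\Z^n$.
\item $a_2$ involves $N_2$, which is twice the number of roots of an ADE root system of rank at most $n$. It is therefore bounded by the root count of $D_n$ (or $E_8$ components), which caps $a_2$.
\item The remaining top coefficients ($a_3,a_4$, and $a_5$ for rank 40) are controlled by the shadow. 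The shadow theta series $\vt_S=\sum_j a_j(\pm)\,\vt_2^{\,n-8j}\vt_4^{\,8j}\cdots$ must have nonnegative integer coefficients, with leading exponent at least $n/8$ determined by the top $a_j$. This yields sign and size constraints; for instance, the leading shadow coefficient is $\pm a_{\lfloor n/8\rfloor}$ times a power of 2 and must be $\ge 0$.
\end{itemize}

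Third, verify $P_L(u)-1=\sum_{j\ge1}a_ju^j<0$ on $(0,1/64]$. Dividing by $u$, it suffices that $a_1+a_2u+a_3u^2+\dots<0$ there. Since $u\le 1/64$ is small, the dominant term is $a_1$. Whenever $a_1\le -2$ (i.e.\ $L_0\neq0$, so $a_1=-2\dim L_0$ effectively), the bounds give $|a_2|/64+|a_3|/64^2+\dots<|a_1|$. For the rank-40 even case, $a_1=-80$ and the analogous estimate is checked with the shadow (here the lattice is its own shadow, and $N_2\le 2\cdot$ roots of $D_{40}$).

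The main obstacle is the second step. We need bounds on the higher coefficients sharp enough that the linear term wins uniformly up to $u=1/64$. The root-system bound on $N_2$ can be large (around $2n^2$ for $D_n$), and $a_2$ then grows like $n^2$ against $a_1\sim -n$. So the margin at $u=1/64$ is genuinely tight near $n=32$. If the crude bound fails, I would use the shadow constraints jointly with $N_2$ in a small linear program over the admissible $(a_2,a_3,a_4)$, maximizing $\max_{u\le 1/64}P_L(u)$, and check the finitely many extreme configurations.
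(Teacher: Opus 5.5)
Your framework is the paper's: Hecke's polynomial in $u$, splitting off $\Z^k$, the ADE bound on $N_2$, and shadow signs for the top coefficients. However, the steps that actually close the argument are either missing or would fail as written.

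\textbf{The case $n=32$.} Your generic shadow constraint (``the leading shadow coefficient is $\pm a_{\lfloor n/8\rfloor}$ times a power of $2$ and is $\ge 0$'') gives only $a_4\ge 0$ there. That is the useless direction, and it is exactly the obstruction that stops the method at rank $33$.

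The needed observation is different. At $n=32$ the $j=4$ summand of $\Theta_{S(L)}$ has exponent $0$, with constant coefficient $a_4/16^4$. Since $0\notin S(L)$, this constant must vanish, so $a_4=0$. Only then does the $j=3$ summand become leading, with leading coefficient $-a_3/16$, and positivity gives $a_3\le0$.

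Your assertion ``leading exponent at least $n/8$'' is not a valid input. The only facts used are $0\notin S(L)$ and nonnegativity of the shadow coefficients.

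This vanishing is also special to odd $M$. If the norm-$1$-free part $M$ is even of rank $32$, its shadow is $M$ itself, the constant term is $1$, and one gets $a_4=16^4>0$. So even $M$ cannot be pushed through the odd argument. It needs the fact that $M_{4m}(\SL_2(\Z))$ has dimension at most $2$, so that every $a_j$ is determined by $N_2$. Your proposal never invokes this.

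\textbf{The final estimate.} Your criterion $|a_2|/64+|a_3|/64^2+\cdots<|a_1|$ cannot be established, and it is false for actual lattices.

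The shadow gives only one-sided information on $a_3$, with no magnitude bound. In rank $40$, take $D_{40}^+$, where $N_2=3120$ and $\Theta_L=E_4^5+1920E_4^2\Delta$. The exact coefficients are $a_1=-80$, $a_2=4480$, $a_3=-102400$, $a_4=819200$, $a_5=-16^5$. Your left side is then $70+25+3.125+0.0625>80$, even though the theorem holds for this lattice.

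What works is a sign-aware estimate. Drop $a_3u^3\le 0$ (and use $a_4=0$ at $n=32$). Then bound $a_2$ sharply using the exact formula $a_2=N_2+2n^2-46n$, where the $-46n$ comes from $u=q-24q^2+\cdots$. Combined with $N_2\le 2n(n-1)$ for $n\ge16$, this gives $a_2\le 4n(n-12)$ and hence $-2n+a_2u\le n(n-44)/16<0$.

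You defer precisely this computation (``if the crude bound fails, \dots linear program''), but it is the heart of the proof.

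\textbf{Rank $40$.} ``The lattice is its own shadow'' adds no positivity information. What you need is that $\Theta_L=E_4^5+(N_2-1200)E_4^2\Delta$. You then need a genuine analysis of the resulting mixed-sign quintic; the paper writes it in $e=1-16u$ and factors it.

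\textbf{A slip that matters.} $N_2$ equals the number of roots $|R|$, not twice it, and $D_{40}$ itself has $3120$ roots. With the doubled bound $N_2\le4n(n-1)$, the $a_2$-estimate gives a positive upper bound at $u=1/64$ for every $30\le n\le32$.
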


In prior work~\cite{Kominers}, we proved that no orbit-constant scalar
Poisson-summation certificate can establish \eqref{eq:RSD-conjecture} in
dimensions $n\geq 8$.
The present proof therefore uses a different method: We work with the
finite-dimensional structure of theta series of integral unimodular lattices,
which dates back to Hecke \cite{Hecke1940} and is closely connected to the
shadow theory developed by Elkies \cite{ElkiesLongShadows} and
Rains--Sloane \cite{RainsSloane}.

The proof also must go beyond shell-by-shell comparisons of lattice vector
counts.
Although \eqref{eq:RSD-conjecture} is a pointwise inequality between
Gaussian sums, $\Z^n$ does not in general dominate other integral unimodular
lattices on each norm shell---for example, $E_8$ has $240$ vectors of
norm $2$, whereas $\Z^8$ has only $112$.
Regev and Stephens-Davidowitz~\cite{RSDshell} proved a uniform upper bound
on the size of every norm shell of an integral lattice, and in subsequent
work \cite{KominersShells} we classified the equality cases, showing that
for $n\geq 2$ the shell bound is saturated only by $\Z^n$ at norm $1$ and by
$E_8$ at norm $2$.
Thus Theorem~\ref{thm:main} cannot follow from termwise shell domination;
the inequality must instead come from cancellations among different shells.
The modular-form parametrization makes these cancellations explicit.

Writing
\[
  u(q)=\frac{\Delta_8(q)}{\vt_3(q)^8}
  =\frac{\vt_2(q)^4\vt_4(q)^4}{16\vt_3(q)^8},
  \qquad q=e^{-t},
\]
Jacobi's identity gives $0<u\leq 1/64$.
Hecke's theorem expresses
\[
  \frac{\Theta_L}{\vt_3^n}
\]
as a polynomial in $u$ of degree $\lfloor n/8\rfloor$.
Since $\lfloor n/8\rfloor\leq 5$ in the ranks considered here, this
polynomial has at most five nonconstant coefficients
$a_1,\ldots,a_{\lfloor n/8\rfloor}$.
These coefficients admit concrete lattice-theoretic control: the coefficient
of $u$ is forced once norm-$1$ vectors have been split off; the coefficient
of $u^2$ is bounded using the ADE root system formed by the norm-$2$
vectors; and, in the odd case, the top coefficients in ranks $24$ through
$32$ are constrained by shadow positivity.
The even rank-\(40\) case is handled separately using the two-dimensionality
of the relevant level-one modular-form spaces and the same ADE root-count
bound.
Our proof therefore does not use the classification of unimodular lattices in
these ranks.
Its lattice-theoretic inputs are only the orthogonal splitting of norm-$1$
vectors, the ADE classification of simply laced root systems, and the
nonnegativity of the shadow theta series.

Several other lines of work are closely related to the present result.
The Belfiore--Sol\'e conjecture, originating in coding-theoretic studies of
the Gaussian wiretap channel, predicts that for every unimodular lattice $L$,
the secrecy function
\[
  \Xi_L(y)=\frac{\Theta_{\Z^n}(iy)}{\Theta_L(iy)}
\]
attains its global maximum on the positive imaginary axis at the symmetry
point $y=1$ \cite{BelfioreSole}; verifications for specific lattice families
have been carried out by Ernvall-Hyt\"onen \cite{ErnvallHytonen}, Pinchak
and Sethuraman \cite{PinchakSethuraman}, and Bollauf, Lin, and Ytrehus
\cite{BollaufLinYtrehus}, among others.
Theorem~\ref{thm:main} is related to, but logically distinct from, this
conjecture: we establish the pointwise lower bound $\Xi_L(y)\geq 1$ for
every integral unimodular lattice through rank $32$, but that does not by
itself locate the global maximum of~$\Xi_L$.

The same Gaussian-mass comparison has also been studied more directly.
A recent preprint of Bollauf and Lin \cite{BollaufLin2024} treats the
maximum-theta-series question over unimodular lattices by means of an
analytic criterion that, when verified for a given lattice, implies the
corresponding upper bound.
The present proof is independent of that criterion and gives an unconditional
comparison through rank $32$ via finite-dimensional modular-form methods.

There are also important special cases coming from universal optimality.
The Cohn--Kumar--Miller--Radchenko--Viazovska universal optimality theorem
for $E_8$ and the Leech lattice $\Lambda_{24}$ \cite{CKMRV} implies the two
comparisons $\Theta_{E_8}\leq\Theta_{\Z^8}$ and
$\Theta_{\Lambda_{24}}\leq\Theta_{\Z^{24}}$, via the interpolation-formula
machinery built on Viazovska's modular forms \cite{Viazovska,CKMRV24}.
Thus the cases $L=E_8$ and $L=\Lambda_{24}$ of Theorem~\ref{thm:main} are
not new with the present argument.
What is new here is the uniform statement across all integral unimodular
lattices through rank $32$, including the remaining $23$ Niemeier lattices in
rank $24$ and the odd unimodular lattices in these ranks.

Finally, the work most methodologically similar to our argument is that of
Cohn and Triantafillou \cite{CohnTriantafillou}, who used dual modular forms
in $E_4$, $\Delta$, and the level-one ring to bound sphere-packing densities
in dimensions $12$, $16$, $20$, $28$, and $32$.
The present proof is similar in spirit in that a finite-dimensional
modular-form space is reduced to explicit coefficient inequalities, but the
inequalities here concern Gaussian theta-series comparisons with $\Z^n$, and
the odd unimodular cases require the additional shadow constraints described
above.

The remainder of this paper is organized as follows.
Section~\ref{sec:prelim} fixes theta-function conventions and records the
modular identities used throughout.
Section~\ref{sec:odd} establishes coefficient and shadow constraints for odd
unimodular lattices with no norm-$1$ vectors.
Section~\ref{sec:odd-rank-32} assembles these into the strict inequality
through rank $32$ in the odd norm-$1$-free case.
Section~\ref{sec:even} treats even unimodular lattices through rank $40$.
Section~\ref{sec:main-proof} proves Theorem~\ref{thm:main}.
Section~\ref{sec:scope} describes the first ranks at which the coefficient
method we use requires new input.

\section{Theta Constants and Hecke's Polynomial Form}\label{sec:prelim}

This section collects the analytic input the rest of the paper depends on.
We fix theta-function conventions, introduce the modular function
$u=\Delta_8/\vt_3^8$ that parametrizes the theta-series ratio, and record
three lemmata: the range $u\in(0,1/64]$ (Lemma~\ref{lem:u-range}), the
identities $E_4/\vt_3^8=1-16u$ and $\Delta/\vt_3^{24}=u^2$
(Lemma~\ref{lem:E4Delta}), and the polynomial form of $\Theta_L/\vt_3^n$ in
$u$ (Lemma~\ref{lem:hecke}).

We put
\[
  q=e^{\pi i\tau},\qquad \tau\in\HH,
\]
and later specialize to $\tau=it/\pi$, so that $q=e^{-t}\in(0,1)$.
Thus our $q$ is the square root of the more common modular variable
$Q=e^{2\pi i\tau}$; this convention makes
$\vt_3(q)=\sum_m q^{m^2}$ have $q$-expansion in integer powers of $q$, but
it shifts the $q$-expansions of $E_4$ and $\Delta$ to powers of $q^2$.
The theta constants are
\[
\begin{aligned}
  \vt_2(q)&=\sum_{m\in\Z}q^{(m+1/2)^2},\\
  \vt_3(q)&=\sum_{m\in\Z}q^{m^2},\\
  \vt_4(q)&=\sum_{m\in\Z}(-1)^m q^{m^2},
\end{aligned}
\]
and hence
\[
  \Theta_{\Z^n}(t)=\vt_3(e^{-t})^n.
\]
We furthermore define
\[
  \Delta_8(q)=\frac{\vt_2(q)^4\vt_4(q)^4}{16},
  \qquad
  u(q)=\frac{\Delta_8(q)}{\vt_3(q)^8}.
\]

\begin{lemma}\label{lem:u-range}
For $0<q<1$, we have
\[
  0<u(q)\leq \frac{1}{64}.
\]
\end{lemma}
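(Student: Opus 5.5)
The plan is to use Jacobi's identity $\vt_3^4=\vt_2^4+\vt_4^4$, valid for $0<q<1$. Set
\[
  a=\frac{\vt_2(q)^4}{\vt_3(q)^4},\qquad b=\frac{\vt_4(q)^4}{\vt_3(q)^4}.
\]
Then Jacobi's identity says exactly $a+b=1$, and by definition
\[
  u(q)=\frac{\vt_2^4\vt_4^4}{16\,\vt_3^8}=\frac{ab}{16}.
\]
So the lemma reduces to showing that $a$ and $b$ are both positive, followed by AM--GM.

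First, positivity. For $0<q<1$ every term of $\vt_2(q)=\sum_m q^{(m+1/2)^2}$ is positive, so $\vt_2(q)>0$; likewise $\vt_3(q)\geq 1>0$. For $\vt_4$, I would not try to argue from the alternating series directly. Instead I would invoke the Jacobi triple product
\[
  \vt_4(q)=\prod_{n\geq1}(1-q^{2n})(1-q^{2n-1})^2,
\]
which is a product of positive factors when $0<q<1$, hence $\vt_4(q)>0$. This gives $a,b>0$, and in particular $u(q)>0$.

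Second, the upper bound. Since $a,b>0$ and $a+b=1$, AM--GM gives $ab\leq (a+b)^2/4=1/4$. Therefore $u=ab/16\leq 1/64$, with equality exactly when $a=b=1/2$, i.e.\ when $\vt_2=\vt_4$. That occurs at the self-dual point $\tau=i$, though the lemma only needs the inequality, not the location of the maximum.

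The only real input is Jacobi's quartic identity together with the product formula for $\vt_4$. Both are classical and can simply be cited, so I do not expect any step here to be a genuine obstacle. If one wanted to avoid the triple product, an alternative route to $\vt_4(q)>0$ is the modular relation $\vt_4(e^{-\pi y})=y^{-1/2}\,\vt_2(e^{-\pi/y})$, whose right-hand side is manifestly a positive series.
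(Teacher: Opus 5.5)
Your proposal is correct and follows essentially the same route as the paper: use Jacobi's identity to write $u$ as $\tfrac{1}{16}$ times the product of two ratios summing to $1$, obtain positivity of $\vt_4$ from the Jacobi triple product, and bound the product by $1/4$. Your additional remarks (the explicit product formula, equality at $\tau=i$, and the alternative positivity argument via $\vt_4(e^{-\pi y})=y^{-1/2}\vt_2(e^{-\pi/y})$) are accurate but not needed.
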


\begin{proof}
Jacobi's identity \cite[Section~21.22]{WhittakerWatson}
\[
  \vt_3(q)^4=\vt_2(q)^4+\vt_4(q)^4
\]
gives
\[
  u(q)=\frac{1}{16}
  \left(\frac{\vt_2(q)^4}{\vt_3(q)^4}\right)
  \left(\frac{\vt_4(q)^4}{\vt_3(q)^4}\right).
\]
For $0<q<1$ the two factors in parentheses are positive (by the Jacobi
triple product formula, $\vt_4(q)>0$ on $(0,1)$) and have sum $1$; their
product is at most~$1/4$.
\end{proof}

The parameter $u$ is closely tied to the level-one Eisenstein series $E_4$
and the modular discriminant $\Delta$, normalized in the $q=e^{\pi i\tau}$
convention by
\[
  E_4(\tau)=1+240q^2+2160q^4+\cdots,
  \qquad
  \Delta(\tau)=q^2\prod_{m\geq1}(1-q^{2m})^{24}.
\]
The following identities make the relationship explicit and let us
re-express $E_4$ and~$\Delta$ as theta-series multiples of polynomials in
$u$.

\begin{lemma}\label{lem:E4Delta}
For $q=e^{\pi i\tau}$, we have the identities
\[
  \frac{E_4(\tau)}{\vt_3(q)^8}=1-16u(q),
  \qquad
  \frac{\Delta(\tau)}{\vt_3(q)^{24}}=u(q)^2.
\]
\end{lemma}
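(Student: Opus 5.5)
We prove both identities using the classical theta-constant expressions for $E_4$ and $\Delta$ together with Jacobi's identity $\vt_3^4=\vt_2^4+\vt_4^4$, exactly as in the proof of Lemma~\ref{lem:u-range}.

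\textbf{The $E_4$ identity.} The first input is the classical formula
\[
  E_4(\tau)=\tfrac12\bigl(\vt_2^8+\vt_3^8+\vt_4^8\bigr).
\]
In our convention $q=e^{\pi i\tau}$ this reads $1+240q^2+\cdots$. Set $a=\vt_2^4$ and $b=\vt_4^4$, so that $\vt_3^4=a+b$. Then
\[
  \vt_2^8+\vt_3^8+\vt_4^8=a^2+(a+b)^2+b^2=2(a+b)^2-2ab,
\]
and hence
\[
  E_4=(a+b)^2-ab=\vt_3^8-\vt_2^4\vt_4^4.
\]
Dividing by $\vt_3^8$ gives $E_4/\vt_3^8=1-16u$, because $\vt_2^4\vt_4^4=16\Delta_8=16u\,\vt_3^8$.

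\textbf{The $\Delta$ identity.} The second input is
\[
  \Delta(\tau)=\Bigl(\tfrac12\vt_2\vt_3\vt_4\Bigr)^8=\frac{\vt_2^8\vt_3^8\vt_4^8}{256}.
\]
This follows from $\vt_2\vt_3\vt_4=2\eta(\tau)^3$, where $\eta$ has the normalization matching $Q=q^2$, that is, $\eta^{24}=\Delta$. Both sides of the cube identity are given by Jacobi triple product expansions. Then
\[
  \frac{\Delta}{\vt_3^{24}}=\frac{(\vt_2^4\vt_4^4/16)^2}{\vt_3^{16}}=\Bigl(\frac{\Delta_8}{\vt_3^8}\Bigr)^2=u^2.
\]

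\textbf{Normalization check.} As a sanity check we compare leading terms. We have $\vt_2=2q^{1/4}(1+q^2+\cdots)$, so $\vt_2^8=256q^2(1+\cdots)$. Together with $\vt_3,\vt_4=1+O(q)$, this gives $\vt_2^8\vt_3^8\vt_4^8/256=q^2+\cdots$, matching $\Delta=q^2\prod(1-q^{2m})^{24}$. Similarly, $\vt_3^8-\vt_2^4\vt_4^4=1+16q+\cdots-16q(1+\cdots)$ starts with $1$ and has no $q$-term, consistent with $1+240q^2$.

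\textbf{Main obstacle.} The only step needing care is the normalization. The identities for $E_4$ and $\eta^3$ must be cited in the $Q=e^{2\pi i\tau}$ convention and transcribed to $q=e^{\pi i\tau}$. To be fully rigorous rather than relying only on the leading-term check, one can argue as follows. Both sides of each identity are modular forms for $\Gamma(2)$ (or level one) of the same weight, namely $4$ and $12$. The relevant spaces are finite-dimensional, so agreement of finitely many $q$-coefficients, within the Sturm bound, proves the identity.
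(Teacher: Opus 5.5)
Your proposal is correct and follows the paper's proof essentially verbatim: both cite the classical theta expressions for $E_4$ and $\Delta$, set $a=\vt_2^4$, $b=\vt_4^4$ with $\vt_3^4=a+b$ to obtain $E_4=\vt_3^8-ab=\vt_3^8-16\Delta_8$, and then write $\Delta/\vt_3^{24}$ as $(\Delta_8/\vt_3^8)^2=u^2$. Your extra justification of the $\Delta$ formula via $\vt_2\vt_3\vt_4=2\eta^3$, and the optional finite-dimensionality check on $\Gamma(2)$, only make explicit what the paper simply calls standard.
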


\begin{proof}
The classical theta identities
\[
  E_4=\frac{1}{2}(\vt_2^8+\vt_3^8+\vt_4^8),
  \qquad
  \Delta=\frac{1}{256}\vt_2^8\vt_3^8\vt_4^8
\]
are standard.
Write $a=\vt_2^4$ and $b=\vt_4^4$.
Since $\vt_3^4=a+b$,
\[
  E_4=\frac{1}{2}\bigl(a^2+(a+b)^2+b^2\bigr)=a^2+ab+b^2=\vt_3^8-ab.
\]
As $ab=16\Delta_8$, this gives $E_4/\vt_3^8=1-16u$.
The discriminant identity gives
\[
  \frac{\Delta}{\vt_3^{24}}
  =\frac{\vt_2^8\vt_4^8}{256\vt_3^{16}}
  =\left(\frac{\vt_2^4\vt_4^4}{16\vt_3^8}\right)^2
  =u^2. \qedhere
\]
\end{proof}

Our coefficient calculations use the following expansions:
\begin{align}
  \vt_3^8&=1+16q+112q^2+448q^3+1136q^4+O(q^5),\notag\\
  \Delta_8&=q-8q^2+28q^3-64q^4+O(q^5),\notag\\
  \frac{1}{\vt_3^8}&=1-16q+144q^2-960q^3+5264q^4+O(q^5),\notag\\
  u&=q-24q^2+300q^3-2624q^4+O(q^5);\label{eq:u-expansion}
\end{align}
these follow from the definitions of $\vt_2$, $\vt_3$, and $\vt_4$, via
direct multiplication.

With the preceding conventions fixed, we turn to the structure theorem for
theta series of integral unimodular lattices.
Let $L$ be a positive-definite integral unimodular lattice of rank $n$, and
set
\[
  \Theta_L(\tau)=\sum_{x\in L} e^{\pi i\|x\|^2\tau}.
\]
We use the same symbol $\Theta_L$ for the Gaussian mass introduced in
Section~\ref{sec:intro} and for this modular theta series; the two
specializations agree under $\tau=it/\pi$, since then
$e^{\pi i\|x\|^2\tau}=e^{-t\|x\|^2}$.
Hecke's theorem, in the form used by Elkies
\cite[p.~644, equations (4)--(7)]{ElkiesLongShadows}, says that $\Theta_L$
is a weighted-homogeneous polynomial in the theta series of $\Z$ and $E_8$
(see also \cite[Chapter~7, Theorem~7]{CS}).

\begin{lemma}\label{lem:hecke}
Let $L$ be a positive-definite integral unimodular lattice of rank $n$.
Then
\[
  \Theta_L=\sum_{j=0}^{\lfloor n/8\rfloor} b_j\,\vt_3^{\,n-8j}E_4^j
\]
for some $b_j\in\C$.
Equivalently, there are real coefficients $a_j$ such that
\begin{equation}\label{eq:u-polynomial}
  \frac{\Theta_L}{\vt_3^n}=1+a_1u+a_2u^2+\cdots+a_\ell u^\ell,
\end{equation}
where $\ell=\lfloor n/8\rfloor$.
\end{lemma}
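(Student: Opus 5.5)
The first assertion is Hecke's theorem, so the plan is to cite it and then derive the $u$-polynomial form algebraically. Concretely, $\Theta_L(\tau)$ is a modular form of weight $n/2$ for the theta group $\Gamma_\vartheta$ (generated by $\tau\mapsto\tau+2$ and $\tau\mapsto-1/\tau$), with a possible multiplier when $n\not\equiv 0\pmod 8$. This follows from the Poisson summation identity $\Theta_L(-1/\tau)=(\tau/i)^{n/2}\Theta_{L^*}(\tau)$ with $L^*=L$, together with integrality of $L$, which gives invariance under $\tau\mapsto\tau+2$ in the convention $q=e^{\pi i\tau}$.

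The graded ring of modular forms for $\Gamma_\vartheta$, with the appropriate multiplier system, is $\C[\vt_3,E_4]$; more precisely it is $\C[\vt_3,\Delta_8]$. I would take this from \cite[Chapter~7, Theorem~7]{CS} or from Elkies' equations (4)--(7) in \cite{ElkiesLongShadows}, rather than reprove it. If one wanted to reprove it, the route is the following.
\begin{itemize}
\item The quotient $\Theta_L/\vt_3^n$ has weight $0$.
\item Since $\vt_3$ has no zeros in $\HH$, and its only zero on the boundary is at the cusp $\tau=1$, this quotient is a meromorphic function on the genus-zero curve $X(\Gamma_\vartheta)$, holomorphic away from that cusp.
\item The function $u$ is a Hauptmodul for $\Gamma_\vartheta$ whose only pole is at the cusp $1$. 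Hence the quotient is a polynomial in $u$.
\item Its degree is bounded by comparing orders at the cusp $1$. There $\vt_3$ vanishes to order $1/8$ in the local parameter, and $u$ has a pole of order $1$, so the degree is at most $\lfloor n/8\rfloor$.
\end{itemize}
This degree bound is the only step with real content, and it is exactly what citing Hecke's theorem supplies.

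For the equivalence between the two forms, divide the first identity by $\vt_3^n$ and apply Lemma~\ref{lem:E4Delta}, which gives $E_4^j/\vt_3^{8j}=(1-16u)^j$. This yields
\[
\frac{\Theta_L}{\vt_3^n}=\sum_{j=0}^{\ell} b_j(1-16u)^j,
\]
a polynomial in $u$ of degree at most $\ell$. The converse direction also holds, since $u=(1-E_4/\vt_3^8)/16$.

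It remains to show that the coefficients are real and that the constant term is $1$. At $q\to0$ we have $u\to0$, while $\Theta_L$ and $\vt_3^n$ both tend to $1$; so the constant term $\sum_j b_j$ equals $1$. For reality, note that $u=q+O(q^2)$ has real integer coefficients by \eqref{eq:u-expansion}, and $\Theta_L/\vt_3^n$ has real (indeed integer) $q$-coefficients. We can then solve triangularly for the coefficients: $a_1$ is determined by the $q^1$ coefficient, $a_2$ by the $q^2$ coefficient, and so on. Each $a_k$ is therefore a real number, determined by the first $\ell$ coefficients of $\Theta_L/\vt_3^n$.
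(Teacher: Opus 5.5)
Your proposal is correct and follows the paper's proof. You cite Hecke's theorem for the span of $\vt_3^{\,n-8j}E_4^j$, divide by $\vt_3^n$ using $E_4=\vt_3^8(1-16u)$, read off the constant term $1$ from $q\to0$, and get reality of the $a_j$ by solving triangularly against $u=q+O(q^2)$ (which just makes the paper's one-line reality claim explicit); your optional Hauptmodul sketch, with $u$ having a simple zero at $\infty$ and a simple pole at the cusp $1$ where $\vt_3$ vanishes to order $1/8$, is a correct outline of the degree bound that the paper takes from Hecke as a black box.
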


\begin{proof}
The theta series $\Theta_L$ of a positive-definite integral unimodular
lattice of rank $n$ is a modular form of weight $n/2$ for the theta group
$\Gamma_\theta=\langle S,T^2\rangle$, with the same multiplier system as
$\vt_3^n$.
Hecke's theorem \cite{Hecke1940} says that the space of such forms is spanned
by the monomials
\[
  \vt_3^{\,n-8j}E_4^j,\qquad 0\leq j\leq\lfloor n/8\rfloor,
\]
each of which has the correct weight.
Since $\Theta_L$ lies in this space, it admits the claimed expression.

By Lemma~\ref{lem:E4Delta}, we have $E_4=\vt_3^8(1-16u)$.
Dividing the Hecke expression by $\vt_3^n$ gives a polynomial in $u$ of
degree at most $\ell$---with constant term $1$, since both $\Theta_L$ and
$\vt_3^n$ have constant term $1$ and $u(q)\to0$ as $q\to0$.
The coefficients are real because the Fourier coefficients of the theta
series are real.
\end{proof}

It is convenient to use the equivalent $\Delta_8$-basis.
Setting $a_0=1$ and using $u^j=\Delta_8^j/\vt_3^{8j}$, we may rewrite
\eqref{eq:u-polynomial} as
\begin{equation}\label{eq:delta8-basis}
  \Theta_L=\sum_{j=0}^{\ell}a_j\,\vt_3^{\,n-8j}\Delta_8^j,
\end{equation}
with the same coefficients $a_j$.

\section{Odd Unimodular Lattices}\label{sec:odd}

This section establishes the coefficient and shadow constraints for odd
unimodular lattices with no norm-$1$ vectors.
Section~\ref{sec:odd-rank-32} assembles the constraints into the strict
inequality through rank $32$; then in Section~\ref{sec:main-proof}, we
extend that result to apply across all integral unimodular lattices, by
splitting off norm-$1$ vectors.

\subsection{The first two coefficients}

The Hecke polynomial form (Lemma~\ref{lem:hecke}) expresses
$\Theta_L/\vt_3^n$ as a polynomial in $u$.
In the rank ranges we care about, the coefficients of $u$ and $u^2$ are
determined by direct lattice data (rather than by shadow positivity, which we
use for higher coefficients).
The next lemma reads them off in terms of $n$ and the number of norm-$2$
vectors.

\begin{lemma}\label{lem:coefficients}
Let $L$ be a positive-definite integral unimodular lattice of rank $n$ with
no vectors of norm $1$.
Let
\[
  N_2=\#\{x\in L:\|x\|^2=2\}.
\]
In the expansion \eqref{eq:u-polynomial}, if $n\geq 8$, then
\[
  a_1=-2n.
\]
If $n\geq 16$, then
\[
  a_2=N_2+2n^2-46n.
\]
\end{lemma}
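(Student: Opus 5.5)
The plan is to compare $q$-expansions on both sides of \eqref{eq:u-polynomial} through order $q^2$. By \eqref{eq:u-expansion}, $u=q-24q^2+O(q^3)$, so $u^j=O(q^j)$. Through order $q^2$ the right-hand side is therefore
\[
  1+a_1q+(a_2-24a_1)q^2+O(q^3).
\]
The hypotheses $n\geq 8$ and $n\geq 16$ simply guarantee that $\ell=\lfloor n/8\rfloor\geq 1$, respectively $\geq 2$. Thus $a_1$, respectively $a_2$, is a genuine coefficient of the polynomial in Lemma~\ref{lem:hecke}. When $\ell=1$, the $q^2$ coefficient is governed by $a_1$ alone and no claim about $a_2$ is made.

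Next I will expand the left-hand side. Since $L$ has no norm-$1$ vectors, $\Theta_L=1+N_2q^2+O(q^3)$. For $\vt_3^n=(1+2q+2q^4+\cdots)^n$:
\begin{itemize}
  \item The $q$-coefficient is $2n$, counting the vectors $\pm e_i$.
  \item The $q^2$-coefficient is $4\binom n2=2n(n-1)$, counting vectors with two entries $\pm1$. No other norm-$2$ vectors exist in $\Z^n$, since $2^2>2$.
\end{itemize}
Inverting the series, $1/\vt_3^n=1-2nq+c\,q^2+O(q^3)$ with
\[
  c=(2n)^2-2n(n-1)=2n^2+2n.
\]
Multiplying gives
\[
  \frac{\Theta_L}{\vt_3^n}=1-2nq+(N_2+2n^2+2n)q^2+O(q^3).
\]

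Matching coefficients gives $a_1=-2n$ from the $q$ term. From the $q^2$ term, $a_2-24a_1=N_2+2n^2+2n$, hence
\[
  a_2=N_2+2n^2+2n-48n=N_2+2n^2-46n.
\]

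The only point requiring care, which I regard as the main (mild) obstacle, is the justification for comparing coefficients at all. Both sides are power series in $q$ convergent for $0<|q|<1$: the ratio $\Theta_L/\vt_3^n$ is holomorphic near $q=0$ because $\vt_3(0)=1$, and the identity \eqref{eq:u-polynomial} holds as an identity of functions. Therefore the Taylor coefficients agree. I also need the normalization $a_0=1$ and that $u^j$ contributes nothing below $q^j$, both of which are already recorded in Lemma~\ref{lem:hecke} and \eqref{eq:u-expansion}.
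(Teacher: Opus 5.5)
Your proposal is correct and follows the paper's proof essentially step for step. You expand $\Theta_L/\vt_3^n$ through order $q^2$ using $1/\vt_3^n=1-2nq+(2n^2+2n)q^2+O(q^3)$, expand the right side using $u=q-24q^2+O(q^3)$, and match coefficients to get $a_1=-2n$ and $a_2+48n=N_2+2n^2+2n$, exactly as the paper does.
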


\begin{proof}
The absence of norm-$1$ vectors gives
\[
  \Theta_L=1+N_2q^2+O(q^3).
\]
Since $\vt_3=1+2q+O(q^4)$,
\[
  \vt_3^n=1+2nq+2n(n-1)q^2+O(q^3),
\]
and inverting the power series gives
\[
  \frac{1}{\vt_3^n}=1-2nq+(2n^2+2n)q^2+O(q^3).
\]
Multiplying $\Theta_L$ by this reciprocal, we have
\begin{align*}
  \frac{\Theta_L}{\vt_3^n}
  &=(1+N_2q^2+O(q^3))
    (1-2nq+(2n^2+2n)q^2+O(q^3))\\
  &=1-2nq+(N_2+2n^2+2n)q^2+O(q^3).
\end{align*}
On the other hand, Lemma~\ref{lem:hecke} expresses $\Theta_L/\vt_3^n$ as a
polynomial in $u$,
\[
  \frac{\Theta_L}{\vt_3^n}=1+a_1u+a_2u^2+\cdots+a_\ell u^\ell.
\]
The $q$-expansion \eqref{eq:u-expansion} gives $u=q-24q^2+O(q^3)$ and
$u^2=q^2+O(q^3)$, while $u^j=O(q^j)=O(q^3)$ for $j\geq3$, so
\[
  1+a_1u+a_2u^2+O(q^3)
  =1+a_1q+(a_2-24a_1)q^2+O(q^3).
\]
The two $q$-expansions of $\Theta_L/\vt_3^n$ must agree.
Coefficient comparison gives $a_1=-2n$ and
\[
  a_2+48n=N_2+2n^2+2n,
\]
which is the claimed formula for $a_2$.
\end{proof}

\subsection{An ADE root-count bound}

The coefficient $a_2$ depends on $N_2$ linearly, so an upper bound on $N_2$
gives an upper bound on $a_2$.
Norm-$2$ vectors form a simply laced root system, and from rank $16$ onward
the irreducible ADE components are uniformly controlled by a single
rank-dependent quantity.

\begin{lemma}\label{lem:root-bound}
Let $R$ be a simply laced root system of total rank at most $n$.
If $n\geq16$, then
\[
  |R|\leq 2n(n-1).
\]
Consequently, if $L$ is a positive-definite integral lattice of rank
$n\geq16$, then the number $N_2$ of norm-$2$ vectors of $L$ satisfies
$N_2\leq 2n(n-1)$.
\end{lemma}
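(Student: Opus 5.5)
The plan is to decompose $R$ into irreducible components and bound each component's root count by a convex function of its rank, then use superadditivity. Write $R=R_1\sqcup\cdots\sqcup R_k$, with $R_i$ irreducible simply laced of rank $r_i$ and $\sum r_i\leq n$. By the ADE classification, each $R_i$ is $A_r$, $D_r$, $E_6$, $E_7$ or $E_8$, with root counts
\[
|A_r|=r(r+1),\quad |D_r|=2r(r-1),\quad |E_6|=72,\quad |E_7|=126,\quad |E_8|=240.
\]
Set $f(r)=2r(r-1)$. The key pointwise claim is that $|R_i|\leq f(r_i)+c(r_i)$, where the error $c$ is nonzero only in a few small or exceptional ranks:
\begin{itemize}
\item For $A_r$, we have $r(r+1)\leq 2r(r-1)$ iff $r\geq 3$. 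The excess is $2$ for $A_1$ (where $f(1)=0$) and $2$ for $A_2$ ($6$ versus $4$).
\item The $D_r$ components give equality.
\item For the exceptional types, $f(6)=60<72$, $f(7)=84<126$ and $f(8)=112<240$, so the excesses are $12$, $42$ and $128$.
\end{itemize}
So the naive termwise bound fails, and we must absorb these excesses using the convexity of $f$.

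The mechanism is superadditivity. Since
\[
f(a+b)-f(a)-f(b)=4ab,
\]
we get $\sum f(r_i)\leq f\bigl(\sum r_i\bigr)-4\sum_{i<j}r_ir_j\leq f(n)-4\sum_{i<j}r_ir_j$, using that $f$ is increasing on $r\geq1$. Hence it suffices to show
\[
\sum_i c(r_i)\leq 4\sum_{i<j}r_ir_j+\bigl(f(n)-f(\textstyle\sum r_i)\bigr).
\]
This is where $n\geq16$ enters. Let $s=\sum r_i$. If $s<n$, the slack satisfies $f(n)-f(s)\geq f(s+1)-f(s)=4s$. The cleanest route is probably to compare directly. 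Let $e$ be the total rank of the exceptional and $A_1,A_2$ components, so that $s-e$ is the rank of the "good" part. Then
\[
|R|\leq f(s-e)+\sum_{\text{bad}}|R_i|,
\]
and I would check
\[
f(s-e)+|{\rm bad}|\leq f(n)
\]
using the cross terms $4(s-e)e$ together with the slack from $n-s$. The bad components are handled as follows:
\begin{itemize}
\item An $E_8$ component alongside rank at least $8$ elsewhere (possible since $n\geq16$) has cross term at least $4\cdot8\cdot8=256\geq128$.
\item For $E_6$ and $E_7$, the cross term $4\cdot 6\cdot 10=240$ and its $E_7$ analogue more than cover the excesses $12$ and $42$.
\item Each $A_1$ or $A_2$ has cross term at least $4\cdot(n-2)\cdot 1\geq 56\geq 2$.
\end{itemize}

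I expect the main obstacle to be bookkeeping, when many bad components coexist, for example $16$ copies of $A_1$ or two copies of $E_8$. In the all-$E_8$ case, $2E_8$ at $n=16$ gives $480=f(16)=2\cdot16\cdot15$, so the bound is tight, which matches the rank-$16$ equality case $E_8\oplus E_8$ (and $D_{16}$). To handle this cleanly, I would prove the single inequality $|X|\leq f(r)+\epsilon(r)$ with the error written as $\epsilon(r)=\max(0,\ldots)$. More efficiently, I would note that every irreducible component satisfies
\[
|R_i|\leq g(r_i):=\max(2r_i(r_i-1),\,30r_i)
\]
(for example $240=30\cdot8$, $126\leq210$, $2\leq30$), and then use that $\sum g(r_i)\leq f(n)$ whenever $\sum r_i\leq n$ and $n\geq16$. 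The two pieces behave as follows:
\begin{itemize}
\item The linear pieces contribute at most $30s$.
\item The quadratic pieces are superadditive.
\end{itemize}
So it reduces to $f(m)+30(s-m)\leq f(n)$ for $0\leq m\leq s\leq n$. Since $f$ is convex with $f(16)=480=30\cdot16$ and $f'\geq30$ beyond $r\approx8$, this holds once $n\geq16$; a short convexity check finishes the argument.

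The lattice consequence then follows immediately. The norm-$2$ vectors of an integral positive-definite lattice form a simply laced root system of rank at most $n$, since reflections in norm-$2$ vectors preserve $L$ and inner products of roots are integers with $|(x,y)|\leq1$ for $x\neq\pm y$.
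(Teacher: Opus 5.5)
Your final argument, via the majorant $g(r)=\max(2r(r-1),30r)$, is correct, but it takes a different route from the paper. The paper never uses superadditivity. It uses the constraint $r_i\le n$ to bound every irreducible component linearly, with an $n$-dependent slope: $|R_i|\le 2r_i(n-1)$. For $A_r$ this holds because $r+1\le 2(n-1)$, and for $D_r$ because $r-1\le n-1$. For $E_6,E_7,E_8$ it holds because $72\le 12(n-1)$, $126\le 14(n-1)$ and $240\le 16(n-1)$; the last inequality is exactly where $n\ge 16$ is needed. Summing then gives $|R|\le 2(n-1)\sum r_i\le 2n(n-1)$ in one line. Your majorant is independent of $n$, so you pay for it at recombination. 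You need superadditivity $f(a+b)=f(a)+f(b)+4ab$ on the quadratic components, and then the check $f(m)+30(n-m)\le f(n)$ for $0\le m\le n$. That check is $h(m)\le h(n)$ for $h(x)=2x(x-16)$, and it holds because $h\le 0$ on $[0,16]$, $h$ is increasing on $[8,\infty)$, and $h(n)\ge 0$ for $n\ge 16$. What your route buys is a per-component inequality that does not depend on the ambient dimension, and a clear reason for the threshold. Indeed, $n=16$ is exactly where the linear regime ($E_8$, slope $30$) meets the quadratic regime ($D_r$), since $f(16)=30\cdot 16$; this matches the two equality configurations $E_8\oplus E_8$ and $D_{16}$. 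The paper's route is shorter and needs no optimization step. Your earlier cross-term bookkeeping sketch is unnecessary and, as written, incomplete. For instance, if $R=E_8$ alone there is no second component to supply cross terms, so the slack must come from $f(n)-f(8)$. The $g$-argument stands on its own and supersedes that sketch.
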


\begin{proof}
If $\alpha,\beta$ are norm-$2$ vectors in $L$, then $(\alpha,\beta)\in\Z$ by
integrality, and the Cauchy-Schwarz inequality gives
$|(\alpha,\beta)|\leq 2$, with equality only when $\beta=\pm\alpha$.
Moreover the reflection
\[
  \beta\longmapsto \beta-(\alpha,\beta)\alpha
\]
preserves $L$ (since $(\alpha,\beta)\in\Z$) and preserves the norm.
Hence the norm-$2$ vectors form a reduced simply laced root system, which by
the ADE classification (see, e.g., \cite[Section~1.4]{Ebeling};
\cite[Chapter~4]{CS}) is a direct sum of irreducible components of types
$A_r$ ($r\geq 1$), $D_r$ ($r\geq 4$), and the exceptional types $E_6$,
$E_7$, $E_8$.
Their root counts are
\[
  |A_r|=r(r+1),\qquad |D_r|=2r(r-1),
\]
and
\[
  |E_6|=72,
  \qquad |E_7|=126,
  \qquad |E_8|=240.
\]
For each irreducible component of rank \(r\), the number of roots is at most
$2r(n-1)$ when $n\geq16$:
Indeed, for \(A_r\) this says \(r(r+1)\leq2r(n-1)\), which follows from
\(r\leq n\) and \(n\geq3\).
For \(D_r\) this says \(2r(r-1)\leq2r(n-1)\), which follows from
\(r\leq n\).
For \(E_6,E_7,E_8\), the inequalities are
\[
72\leq 2\cdot6(n-1),\qquad
126\leq2\cdot7(n-1),\qquad
240\leq2\cdot8(n-1),
\]
which are all valid for \(n\geq16\).
Summing over components of total rank at most \(n\) gives
\[
|R|\leq2(n-1)\sum_i r_i\leq2n(n-1). \qedhere
\]
\end{proof}

Combining Lemma~\ref{lem:coefficients} with Lemma~\ref{lem:root-bound}, we
see that a rank $n\geq16$ lattice with no norm-$1$ vectors must satisfy
\begin{equation}\label{eq:a2-bound}
  a_2\leq 2n(n-1)+2n^2-46n=4n(n-12).
\end{equation}

\subsection{The shadow constraints}

The bound~\eqref{eq:a2-bound} on $a_2$ alone is enough to isolate
$\Theta_L/\vt_3^n$ through rank $23$.
Beyond that, the polynomial $\Theta_L/\vt_3^n$ has higher-degree coefficients
$a_3$, $a_4$ that the root system does not control directly.
These coefficients are pinned down instead by shadow positivity, which
expresses an additional theta series of $L$ as a Fourier series with
nonnegative coefficients.

Let $L$ be an odd unimodular lattice.
The even sublattice
\[
  L_0=\{x\in L:\|x\|^2\in 2\Z\}
\]
has index $2$ in $L$.
The \emph{shadow} of $L$ is the coset union
\[
  S(L)=L_0^*\setminus L,
\]
where $^*$ denotes the dual lattice.
The theta series
\[
  \Theta_{S(L)}=\sum_{x\in S(L)}q^{\|x\|^2}
\]
has nonnegative coefficients, possibly in fractional powers of $q$.

\begin{lemma}\label{lem:shadow-formula}
Let $L$ be a positive-definite odd unimodular lattice of rank $n$, written in
the $\Delta_8$-basis as in \eqref{eq:delta8-basis}.
Then
\begin{equation}\label{eq:shadow-formula}
  \Theta_{S(L)}
  =
  \sum_{j=0}^{\ell}
  \frac{(-1)^j}{16^j}a_j\,
  \vt_2(q)^{\,n-8j}\vt_4(q^2)^{8j},
\end{equation}
where, as in Lemma~\ref{lem:hecke}, we take $\ell=\lfloor n/8\rfloor$.
\end{lemma}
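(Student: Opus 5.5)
The plan is to obtain \eqref{eq:shadow-formula} by applying the modular transformation $\tau\mapsto-1/\tau$ to the $\Delta_8$-basis expression \eqref{eq:delta8-basis}, term by term. The standard fact we invoke (Conway--Sloane, Elkies) is that the shadow theta series is
\[
  \Theta_{S(L)}(\tau)=\Theta_L(\tau+1)\big|_{\tau\mapsto-1/\tau}
\]
up to the automorphy factor $(\tau/i)^{-n/2}$. More precisely, $\Theta_{S(L)}(\tau)=(\tau/i)^{-n/2}\,\Theta_L(1-1/\tau)$. This follows from Poisson summation applied to the coset decomposition $L_0^*=L_0\cup(L\setminus L_0)\cup S(L)$, and from the observation that $\Theta_L(\tau+1)$ is the theta series of $L$ twisted by the character $(-1)^{\|x\|^2}$, i.e.\ $\Theta_{L_0}-\Theta_{L\setminus L_0}$, whose Fourier transform is supported on the dual coset $S(L)$. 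I would state this as a known lemma with a citation, or give the short Poisson-summation derivation.

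Since the transformation is linear, it suffices to compute the image of each building block under $\tau\mapsto\tau+1$ followed by $\tau\mapsto-1/\tau$, with the weight factor distributed multiplicatively. We work in the convention $q=e^{\pi i\tau}$.

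Under $\tau\mapsto\tau+1$ we have $q\mapsto -q$. Hence:
\begin{itemize}
  \item $\vt_3\mapsto\vt_4$;
  \item $\vt_4\mapsto\vt_3$;
  \item $\vt_2\mapsto e^{\pi i/4}\vt_2$, so $\vt_2^4\mapsto-\vt_2^4$.
\end{itemize}
Consequently $\Delta_8=\vt_2^4\vt_4^4/16\mapsto-\vt_2^4\vt_3^4/16$.

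Under $\tau\mapsto-1/\tau$, with weight factor $(\tau/i)^{1/2}$ per theta, $\vt_3$ and $\vt_4$ swap with $\vt_2$ as follows:
\begin{itemize}
  \item $\vt_4\mapsto\vt_2$;
  \item $\vt_3\mapsto\vt_3$;
  \item $\vt_2\mapsto\vt_4$.
\end{itemize}

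Composing, the term $\vt_3^{n-8j}\Delta_8^j$ is sent to
\[
  \vt_2^{\,n-8j}\cdot\frac{(-1)^j}{16^j}\,\bigl(\vt_4^4\vt_3^4\bigr)^j .
\]
The automorphy factors combine to exactly $(\tau/i)^{n/2}$, which cancels the prefactor; this holds because each term has weight $n/2$.

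The last step is the duplication identity $\vt_3(q)\vt_4(q)=\vt_4(q^2)^2$. This is classical and follows from the product formulas, or from
\[
  \vt_3(q)\vt_4(q)=\sum_{m,k}(-1)^k q^{m^2+k^2},
\]
where pairing $(m,k)$ via $m\pm k$ reduces to $\vt_4(q^2)^2$. It gives $(\vt_3\vt_4)^{4j}=\vt_4(q^2)^{8j}$, which yields exactly \eqref{eq:shadow-formula}.

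The main obstacle is bookkeeping: keeping the eighth-root-of-unity phases and the square-root branch of $\tau/i$ consistent, so that the sign $(-1)^j$ emerges and no stray overall phase remains. I would control this in two ways. First, $\vt_2^{n-8j}$ carries the phase $e^{\pi i(n-8j)/4}=e^{\pi i n/4}$, which is independent of $j$; this common phase is absorbed by the precise normalization of the shadow formula. Second, I would fix that normalization by checking the case $L=\Z$ (more precisely, $n=1$, $\ell=0$), where the shadow is $\Z+\tfrac12$ with theta series $\vt_2$, matching the $j=0$ term.
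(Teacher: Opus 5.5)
Your proof is correct and follows the same skeleton as the paper: apply Elkies's relation $\Theta_{S(L)}(\tau)=(\tau/i)^{-n/2}\Theta_L(1-1/\tau)$ termwise to the $\Delta_8$-basis expression, then finish with $\vt_3(q)\vt_4(q)=\vt_4(q^2)^2$. You also sketch the Poisson-summation proof of the relation, which the paper only cites.

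The difference is in how the image of $\Delta_8$ is computed. You push each of $\vt_2,\vt_3,\vt_4$ through $\tau\mapsto\tau+1$ and then $\tau\mapsto-1/\tau$, which gives $\Delta_8\mapsto-\vt_3^4\vt_4^4/16$ directly. This makes the source of $(-1)^j$ transparent: it is the factor $-1$ acquired by $\vt_2^4$ under the shift. The paper needs only the images of $\vt_3$ and $E_4$: $\vt_3\mapsto\vt_2$ because $S(\Z)=\Z+\tfrac12$, and $E_4\mapsto E_4$ because $E_4$ has level one. It then writes $\Delta_8=(\vt_3^8-E_4)/16\mapsto(\vt_2^8-E_4)/16$ and uses Jacobi's identity to rewrite $\vt_2^8-E_4$ as $-\vt_3^4\vt_4^4$. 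So the paper avoids the full transformation table at the cost of that extra identity, while your route needs the table but no Jacobi step.

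One correction to your closing remarks. In your own computation there is no phase on $\vt_2^{n-8j}$, since $\vt_3\mapsto\vt_4\mapsto\vt_2$ picks up only the automorphy factor. This matters because ``absorbing'' a phase into the normalization would not be legitimate. Elkies's relation is an exact identity with no free constant, and an $n$-dependent phase such as $e^{\pi i n/4}$ could not be pinned down by checking $n=1$ anyway. So present the $L=\Z$ computation as a consistency check, not as a normalization step.
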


\begin{proof}
Elkies \cite[p.~644, equation (3)]{ElkiesLongShadows} records the modular
transformation
\[
  \Theta_L\!\left(-\frac{1}{\tau}+1\right)
  =\left(\frac{\tau}{i}\right)^{n/2}\Theta_{S(L)}(\tau)
\]
relating $\Theta_L$ to $\Theta_{S(L)}$.
By Lemma~\ref{lem:hecke}, $\Theta_L$ is a polynomial in $\vt_3$ and $E_4$.
Elkies's transformation acts on this polynomial expression by sending
$\vt_3\mapsto\vt_2$ and fixing $E_4$; substituting yields
$\Theta_{S(L)}$ as the corresponding polynomial in $\vt_2$ and $E_4$.
The first substitution comes from $S(\Z)=\Z+\tfrac{1}{2}$, which gives
$\Theta_{S(\Z)}=\vt_2$; the second from the fact that $E_4=\Theta_{E_8}$ is
a modular form for the full group $\SL_2(\Z)$ (since $E_8$ is even
unimodular), so the shadow operation, viewed as a weight-normalized slash
action, fixes it.

We compute the image of $\Delta_8$ under this substitution.
By Lemma~\ref{lem:E4Delta},
\[
  \Delta_8=\frac{\vt_3^8-E_4}{16},
\]
which is sent to $(\vt_2^8-E_4)/16$.
The duplication identity
\begin{equation}\label{eq:duplication}
  \vt_4(q^2)^2=\vt_3(q)\vt_4(q)
\end{equation}
(see \cite[Section~20.7]{DLMF}) and the Jacobi identity
$\vt_3^4=\vt_2^4+\vt_4^4$ together give
\[
  \vt_4(q^2)^8=\vt_3(q)^4\vt_4(q)^4=\vt_2^4\vt_4^4+\vt_4^8.
\]
By Lemma~\ref{lem:E4Delta} and the Jacobi identity,
\[
  E_4=\vt_3^8-16\Delta_8
  =(\vt_2^4+\vt_4^4)^2-\vt_2^4\vt_4^4
  =\vt_2^8+\vt_2^4\vt_4^4+\vt_4^8,
\]
hence
\[
  \vt_4(q^2)^8=E_4-\vt_2(q)^8.
\]
Thus $\Delta_8$ is sent to $-\vt_4(q^2)^8/16$, and substituting into the
$\Delta_8$-basis expression \eqref{eq:delta8-basis} for $\Theta_L$ gives
\eqref{eq:shadow-formula}.
\end{proof}

As a consistency check on the normalization in \eqref{eq:shadow-formula}:
when $L=\Z^n$, we have $a_0=1$ and $a_j=0$ for $j\geq1$, so the formula
reduces to $\Theta_{S(\Z^n)}=\vt_2(q)^n$.
This is correct, since $S(\Z^n)=(\Z+\tfrac{1}{2})^n$ has theta series
$\vt_2^n$ by the definition of $\vt_2$.

The leading term of the $j$-th summand in \eqref{eq:shadow-formula} has
exponent $(n-8j)/4$ and leading coefficient
\begin{equation}\label{eq:shadow-leading-coefficient}
  \frac{(-1)^j}{16^j}a_j2^{n-8j},
\end{equation}
since $\vt_2(q)=2q^{1/4}+O(q^{9/4})$ and $\vt_4(q^2)=1+O(q^2)$.

For $24\leq n\leq31$, we have $\ell=\lfloor n/8\rfloor=3$.
Recall that $S(L)=L_0^*\setminus L$, so $0\notin S(L)$, and the constant
term of $\Theta_{S(L)}$ is zero.
By \eqref{eq:shadow-leading-coefficient}, the leading coefficient of the
$j=3$ summand is
\[
  -\frac{a_3}{16^3}2^{\,n-24}.
\]
If $n=24$, then the $j=3$ term is constant, and its coefficient must vanish;
hence $a_3=0$.
If $25\leq n\leq31$, the $j=3$ term is the unique lowest-degree term in the
shadow theta series (with exponent $(n-24)/4>0$), so nonnegativity of its
coefficient gives $a_3\leq0$.
Thus
\begin{equation}\label{eq:a3-nonpositive}
  a_3\leq0\qquad(24\leq n\leq31).
\end{equation}
For $n=32$, the $j=4$ term is constant with coefficient $a_4/16^4$; again,
since $0\notin S(L)$, we have
\begin{equation}\label{eq:a4-zero}
  a_4=0.
\end{equation}
After this vanishing, the $j=4$ summand contributes nothing to
\eqref{eq:shadow-formula}, so the next--lowest-degree term is the $j=3$ term
with leading coefficient $-a_3\cdot 2^{8}/16^3=-a_3/16$, and nonnegativity
gives
\begin{equation}\label{eq:a3-nonpositive-32}
  a_3\leq0\qquad(n=32).
\end{equation}

\section{The Odd Case Through Rank $32$}\label{sec:odd-rank-32}

We now assemble the lemmata of Section~\ref{sec:odd} into a strict
Gaussian-mass inequality.
The strategy is dimensional: at rank $n\leq 32$, the polynomial
$\Theta_L/\vt_3^n$ has degree at most $4$, so the proof reduces to bounding
the four nonconstant coefficients $a_1,a_2,a_3,a_4$.
The constraints from Section~\ref{sec:odd} give $a_1=-2n$, force
$a_3\leq 0$ and (at $n=32$) $a_4=0$ when these enter, and bound $a_2$ above
sharply enough that, although $a_2u^2$ may be positive, the linear term
$-2nu$ dominates it on $u\in(0,1/64]$.

\begin{proposition}\label{prop:odd-no-one}
Let $L$ be a positive-definite odd unimodular lattice of positive rank
$n\leq32$ with no norm-$1$ vectors.
Then
\[
  \Theta_L(t)<\Theta_{\Z^n}(t)\qquad(t>0).
\]
\end{proposition}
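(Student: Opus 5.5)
The plan is to work entirely with the Hecke form \eqref{eq:u-polynomial}. Since $\vt_3(e^{-t})>0$, the inequality $\Theta_L(t)<\Theta_{\Z^n}(t)$ is equivalent to $\Theta_L/\vt_3^n<1$. Write $P(u)=1+a_1u+\cdots+a_\ell u^\ell$ and recall that $u\in(0,1/64]$ by Lemma~\ref{lem:u-range}. It then suffices to show
\[
  Q(u)=a_1+a_2u+a_3u^2+a_4u^3<0\qquad\text{for } u\in(0,1/64],
\]
with $a_j=0$ for $j>\ell$. This is because $P(u)-1=uQ(u)$ and $u>0$.

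First I dispose of small rank. If $n<8$, then $\ell=0$, so $\Theta_L=\vt_3^n=1+2nq+\cdots$. This forces norm-$1$ vectors, which contradicts the hypothesis, so there is no such $L$ and the statement is vacuous. For $8\leq n\leq 15$ we have $\ell=1$, and Lemma~\ref{lem:coefficients} gives $Q=a_1=-2n<0$.

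For $16\leq n\leq 32$, I use $a_1=-2n$ (Lemma~\ref{lem:coefficients}) and the bound $a_2\leq 4n(n-12)$ from \eqref{eq:a2-bound}. The higher coefficients are controlled by shadow positivity, which is available because $L$ is odd. For $n\leq 23$ there is no $a_3$ term. For $24\leq n\leq 31$, \eqref{eq:a3-nonpositive} gives $a_3\leq0$ and there is no $a_4$. For $n=32$, \eqref{eq:a4-zero} gives $a_4=0$ and \eqref{eq:a3-nonpositive-32} gives $a_3\leq0$. In every case $a_3u^2\leq0$ and $a_4u^3=0$, so
\[
  Q(u)\leq -2n+\max\{a_2,0\}\,u\leq -2n+\frac{\max\{4n(n-12),0\}}{64}.
\]
When $n\leq 12$ this is $-2n<0$. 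Otherwise it equals $-2n+n(n-12)/16$, which is negative exactly when $n-12<32$, that is, when $n<44$. This certainly holds for $n\leq 32$; at $n=32$ the bound reads $-64+40<0$. Hence $Q<0$ on $(0,1/64]$, and the strict inequality follows for every $t>0$.

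The only delicate point is making sure every higher coefficient that can appear has the right sign. The key case is $n=32$: there $a_4$ must vanish before the $j=3$ term becomes the lowest-order term of the shadow series, and only then does its sign follow. I take both facts directly from the shadow analysis of Section~\ref{sec:odd}. Beyond this, the argument is a one-line estimate using the crude bound $u\leq1/64$. The constant $64>40$ leaves comfortable slack, so I do not expect to need any finer information about $u$.
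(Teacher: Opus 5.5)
Your proposal is correct and follows the paper's proof essentially step for step. It uses the same case split by $\ell=\lfloor n/8\rfloor$, the same inputs $a_1=-2n$, $a_2\le 4n(n-12)$ and $u\le 1/64$, and the same shadow constraints ($a_3\le0$, and $a_4=0$ at $n=32$); factoring out $u$ as $P(u)-1=uQ(u)$ and writing $\max\{a_2,0\}$ are only cosmetic differences.
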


\begin{proof}
Let $q=e^{-t}$ and $u=u(q)$.
Lemma~\ref{lem:u-range} gives $0<u\leq1/64$.
Since $\Theta_{\Z^n}(t)=\vt_3(q)^n$, the strict inequality
$\Theta_L(t)<\Theta_{\Z^n}(t)$ is equivalent to
$\Theta_L/\vt_3^n - 1 < 0$, and it is this difference that we bound by cases
on $n$ below.
(Both $\Theta_L$ and $\vt_3^n$ are positive, so the ratio itself lies in
$(0,1)$ whenever the inequality holds; the negative quantities appearing in
the sequel are the differences $\Theta_L/\vt_3^n - 1$, not the ratios
themselves.)

For $1\leq n<8$, suppose $L$ is as in the proposition's hypotheses.
Then \eqref{eq:u-polynomial} has degree $\lfloor n/8\rfloor=0$, so
$\Theta_L=\vt_3^n$.
The $q$-coefficient of $\Theta_L=\sum_{x\in L}q^{\|x\|^2}$ counts the
norm-$1$ vectors in $L$, which is $0$ by hypothesis; but the $q$-coefficient
of $\vt_3^n$ is $2n>0$.
This contradiction shows that no such $L$ exists in positive rank below $8$,
and the proposition holds automatically in that range.

Now, assume $8\leq n\leq15$.
Then $\ell=1$, and Lemma~\ref{lem:coefficients} gives
\[
  \frac{\Theta_L}{\vt_3^n}=1-2nu<1.
\]

Next, assume $16\leq n\leq23$.
Then $\ell=2$, and \eqref{eq:a2-bound} gives
\[
  \frac{\Theta_L}{\vt_3^n}-1
  =-2nu+a_2u^2
  \leq
  u\left(-2n+\frac{4n(n-12)}{64}\right)
  =\frac{nu(n-44)}{16}<0.
\]

And next, assume $24\leq n\leq31$.
Then $\ell=3$.
By \eqref{eq:a3-nonpositive}, the cubic term $a_3u^3$ is nonpositive, and
the estimate from the previous case (which holds for every $n\geq16$ with
$n<44$) gives
\[
  \frac{\Theta_L}{\vt_3^n}-1
  =-2nu+a_2u^2+a_3u^3
  \leq -2nu+a_2u^2
  \leq \frac{nu(n-44)}{16}<0.
\]

Finally, when $n=32$, equations \eqref{eq:a4-zero} and
\eqref{eq:a3-nonpositive-32} give
\[
  \frac{\Theta_L}{\vt_3^{32}}-1
  =-64u+a_2u^2+a_3u^3
  \leq -64u+a_2u^2.
\]
By \eqref{eq:a2-bound}, $a_2\leq4\cdot32\cdot20=2560$, and therefore
\[
  -64u+a_2u^2
  \leq u\left(-64+\frac{2560}{64}\right)
  =-24u<0. \qedhere
\]
\end{proof}

\begin{remark}\label{rem:low-rank-examples}
Odd unimodular lattices without any norm-$1$ vectors appear already in
moderate rank.
Elkies \cite[pp.~645--646]{ElkiesLongShadows} lists those of rank at most
$23$ that minimize the number of norm-$2$ vectors, including the rank-$12$
lattice with root system $D_{12}$.
The rank-$23$ shorter Leech lattice $O_{23}$ is distinguished instead by
having no norm-$1$ or norm-$2$ vectors.
\end{remark}

\section{Even Unimodular Lattices Through Rank $40$}\label{sec:even}

For even unimodular lattices, the theta series is a level-one modular form.
Through weight $20$---equivalently, rank $40$---the space of such forms is at
most two-dimensional, and the ADE root-count bound from Section~\ref{sec:odd}
suffices to control the relevant Fourier coefficients.
The argument in the even case is therefore more straightforward than in the
odd case; in particular, shadow theory need not enter.

Let $L$ be even unimodular of rank $n=8m$.
Then $\Theta_L$ is a level-one modular form of weight $4m$.
For $1\leq m\leq5$, the space $M_{4m}(\SL_2(\Z))$ of such forms has
dimension~$1$ for $m=1,2$ and dimension~$2$ for $m=3,4,5$; the general
dimension formula is recalled in Section~\ref{sec:scope}.
Put
\[
  e=\frac{E_4}{\vt_3^8}=1-16u;
\]
by Lemma~\ref{lem:u-range}, we have
\begin{equation}\label{eq:e-range}
  \frac{3}{4}\leq e<1.
\end{equation}

\begin{proposition}\label{prop:even}
Let $L$ be a positive-definite even unimodular lattice of rank $8m$ with
$1\leq m\leq5$.
Then
\[
  \Theta_L(t)<\Theta_{\Z^{8m}}(t)\qquad(t>0).
\]
\end{proposition}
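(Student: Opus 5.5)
The plan is to use the dimension count for level-one modular forms to write $\Theta_L$ explicitly in the basis $E_4^m$, $E_4^{m-3}\Delta$. The single free coefficient is fixed by the number $N_2$ of roots, and Lemma~\ref{lem:root-bound} bounds $N_2$. Everything is then divided by $\vt_3^{8m}$ and expressed in $e=1-16u$ and $u$ via Lemma~\ref{lem:E4Delta}.

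\emph{Small cases $m=1,2$.} Here $M_{4m}(\SL_2(\Z))$ is one-dimensional and spanned by $E_4^m$. Comparing constant terms gives $\Theta_L=E_4^m$, so
\[
  \frac{\Theta_L}{\vt_3^{8m}}=e^m<1,
\]
because $u>0$ forces $e<1$ by \eqref{eq:e-range}.

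\emph{Cases $m=3,4,5$.} The space is spanned by $E_4^m$ and $E_4^{m-3}\Delta$, so
\[
  \Theta_L=E_4^m+c\,E_4^{m-3}\Delta
\]
for some real $c$. To find $c$, compare $q^2$-coefficients in our $q=e^{\pi i\tau}$ convention. The left side has $q^2$-coefficient $N_2$. On the right, $E_4^m$ contributes $240m$ and $\Delta=q^2+O(q^4)$ contributes $c$. Hence
\[
  c=N_2-240m.
\]
Since $n=8m\geq 24\geq 16$, Lemma~\ref{lem:root-bound} gives $N_2\leq 2n(n-1)$. This yields $c\leq 384$, $1024$, $1920$ for $m=3,4,5$ respectively. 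Dividing by $\vt_3^{8m}$ and applying Lemma~\ref{lem:E4Delta},
\[
  \frac{\Theta_L}{\vt_3^{8m}}=e^m+c\,e^{m-3}u^2 .
\]
If $c\leq0$, this is already less than $1$. Otherwise it suffices to show
\[
  c\,e^{m-3}u^2<1-e^m .
\]

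\emph{The key estimate.} Because $1-e=16u$, we can factor
\[
  1-e^m=16u\,(1+e+\cdots+e^{m-1})\geq 16u\,m\,e^{m-1},
\]
using $0<e<1$. After dividing out $u\,e^{m-3}>0$, the target reduces to
\[
  c\,u<16m\,e^2 .
\]
By Lemma~\ref{lem:u-range} and \eqref{eq:e-range} we have $u\leq 1/64$ and $e^2\geq 9/16$. So it is enough that $c/64<9m$, that is, $c<576m$. The three bounds check directly: $384<1728$, $1024<2304$, and $1920<2880$. This gives the strict inequality for every $t>0$.

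The main obstacle is this last estimate: the positive $\Delta$-term has to be absorbed by $1-e^m$ uniformly on $u\in(0,1/64]$. The factorization of $1-e^m$, which produces a factor of $u$ on both sides, is what makes it work. I expect the margins to be comfortable, with the tightest at $m=4$ and $m=5$. One routine point to confirm is that the $q^2$-coefficient comparison stays valid in the $q=e^{\pi i\tau}$ normalization, where $E_4=1+240q^2+\cdots$ and $\Delta=q^2+\cdots$.
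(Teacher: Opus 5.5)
Your proof is correct and follows the paper's argument up to the reduction to $e^m+c\,e^{m-3}u^2<1$ with $c=N_2-240m\le128m(m-2)$. The only difference is the last step. The paper substitutes $u=(1-e)/16$, factors out $1-e$, and checks that three explicit cofactor polynomials are positive on $[3/4,1]$, using their values at $e=3/4$ and monotonicity. Your geometric-sum bound $1-e^m\ge16u\,m\,e^{m-1}$ instead handles all three cases at once through the single linear check $c<576m$.

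Your version is shorter and shows the slack directly, though it is lossy where the paper's factorizations are exact. One small correction: the tightest case is $m=5$ alone, where $c/64=30$ against $9m=45$, not $m=4$ and $m=5$.
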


\begin{proof}
As in the proof of Proposition~\ref{prop:odd-no-one}, the strict inequality
$\Theta_L(t)<\Theta_{\Z^{8m}}(t)$ is equivalent to
$\Theta_L/\vt_3^{8m}<1$, since $\Theta_{\Z^{8m}}=\vt_3^{8m}$; we establish
this ratio bound below.

For $m=1,2$, the space $M_{4m}(\SL_2(\Z))$ is one-dimensional.
Hence $\Theta_L=E_4^m$, and
\[
  \frac{\Theta_L}{\vt_3^{8m}}=e^m<1.
\]

For $m=3,4,5$, the space $M_{4m}(\SL_2(\Z))$ is two-dimensional, with basis
$E_4^m$ and $E_4^{m-3}\Delta$.
Thus
\[
  \Theta_L=E_4^m+cE_4^{m-3}\Delta.
\]
Let $r$ be the number of norm-$2$ vectors in $L$.
The coefficient of $q^2$ in $\Theta_L$ is $r$, while $E_4^m$ contributes
$240m$ and $E_4^{m-3}\Delta$ contributes $1$.
Hence
\[
  c=r-240m.
\]
The norm-$2$ vectors form a simply laced root system of rank at most $8m$.
Lemma~\ref{lem:root-bound} gives
\[
  r\leq2(8m)(8m-1),
\]
and therefore
\begin{equation}\label{eq:c-bound}
  c\leq2(8m)(8m-1)-240m=128m(m-2).
\end{equation}
Using Lemma~\ref{lem:E4Delta},
\[
  \frac{\Theta_L}{\vt_3^{8m}}
  =e^m+ce^{m-3}u^2
  \leq e^m+128m(m-2)e^{m-3}u^2.
\]
Since $u=(1-e)/16$, the upper bound is, respectively,
\[
  e^3+\frac{3}{2}(1-e)^2,
  \qquad
  e^4+4e(1-e)^2,
  \qquad
  e^5+\frac{15}{2}e^2(1-e)^2
\]
for $m=3,4,5$.

For $e\in[3/4,1)$, the desired strict inequality
$\Theta_L/\vt_3^{8m}<1$ reduces to showing that the upper bound above is
strictly less than $1$, equivalently, that $1$ minus the upper bound is
strictly positive.
We factor:
\begin{align*}
  1-e^3-\frac{3}{2}(1-e)^2
    &=\frac{1-e}{2}(2e^2+5e-1),\\
  1-e^4-4e(1-e)^2
    &=(1-e)(e^3+5e^2-3e+1),\\
  1-e^5-\frac{15}{2}e^2(1-e)^2
    &=\frac{1-e}{2}(2e^4+17e^3-13e^2+2e+2).
\end{align*}
The first factor on the right ($\frac{1-e}{2}$ or $1-e$) is strictly
positive on $[3/4,1)$.
It therefore suffices to show that each second factor is strictly positive
on $[3/4,1]$.
We do this by verifying that each (i) is positive at $e=3/4$ and (ii) has
positive derivative on $[3/4,1]$, and thus is monotone increasing on the
interval.

\smallskip
\noindent\emph{Positivity at $e=3/4$.}
Direct substitution gives the values
\[
  \tfrac{31}{8},
  \qquad
  \tfrac{127}{64},
  \qquad
  \tfrac{511}{128}
\]
for the three second factors, all positive.

\smallskip
\noindent\emph{Monotonicity on $[3/4,1]$.}
The derivatives of the second factors are
\[
  4e+5,
  \qquad
  3e^2+10e-3,
  \qquad
  8e^3+51e^2-26e+2,
\]
respectively.
The first two are evidently positive on $[3/4,1]$: $4e+5\geq 8$, and
$3e^2+10e-3$ has positive derivative there and takes the value $99/16$ at
$e=3/4$.
For the third, write $g(e)=8e^3+51e^2-26e+2$; its derivative
$g'(e)=24e^2+102e-26$ takes the value $64$ at $e=3/4$ and is strictly
increasing on $[3/4,1]$, so $g$ is monotone increasing there.
Since $g(3/4)=233/16>0$, we have $g(e)>0$ throughout $[3/4,1]$.

\smallskip
Combining the preceding calculations, we see that each second factor is
strictly positive on $[3/4,1]$.
Hence, the upper bound on $\Theta_L/\vt_3^{8m}$ is strictly less than $1$
throughout $[3/4,1)$, and the strict inequality
$\Theta_L(t)<\Theta_{\Z^{8m}}(t)$ follows for $m=3,4,5$.
\end{proof}

\section{Proof of the Main Theorem}\label{sec:main-proof}

The two strict inequalities established so far---Proposition~\ref{prop:odd-no-one}
for odd unimodular lattices of rank $n\leq 32$ without norm-$1$ vectors, and
Proposition~\ref{prop:even} for even unimodular lattices of rank
$\leq 40$---almost give Theorem~\ref{thm:main} directly.
The remaining step is to dispose of norm-$1$ vectors in the odd case, which
we do by splitting them off as orthogonal copies of $\Z$.

\begin{lemma}\label{lem:norm-one-splitting}
Let $L$ be a positive-definite integral unimodular lattice.
Then there is an orthogonal decomposition
\begin{equation}\label{eq:norm-one-splitting}
  L\cong \Z^r\oplus M,
\end{equation}
where $r\geq0$ and $M$ is a positive-definite integral unimodular lattice
with no norm-$1$ vectors.
\end{lemma}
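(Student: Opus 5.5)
We argue by induction on the rank of $L$, splitting off one norm-$1$ vector at a time. The key observation is that any norm-$1$ vector $v\in L$ spans an orthogonal direct summand.

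First, if $L$ has no vectors of norm $1$, take $r=0$ and $M=L$. Otherwise, pick $v\in L$ with $\|v\|^2=1$ and define the projection
\[
  x\longmapsto x-(x,v)v .
\]
Integrality gives $(x,v)\in\Z$, so this map sends $L$ into $L$, and its image lies in $L\cap v^\perp$. Hence every $x\in L$ decomposes as
\[
  x=(x,v)v+\bigl(x-(x,v)v\bigr),
\]
which shows $L=\Z v\oplus L'$ with $L'=L\cap v^{\perp}$. The two summands are orthogonal, and $\Z v\cap L'=0$ since $(v,v)=1\neq0$.

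Next, we check that $L'$ is again positive-definite, integral, and unimodular. Positive-definiteness and integrality are inherited from $L$. For unimodularity, the Gram matrix of $L$ in a basis adapted to the decomposition is block diagonal, $\mathrm{diag}(1,G')$, so $\det G'=\det G=1$.

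Finally, we iterate on $L'$, whose rank is one less. Since the rank strictly decreases, the process terminates after at most $n$ steps. It yields $L\cong\Z^r\oplus M$ with $M$ unimodular and free of norm-$1$ vectors; possibly $M=0$, which is still a (trivially) unimodular lattice of rank $0$.

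There is no substantive obstacle here. The only point requiring care is that the projection maps $L$ into $L$, which uses integrality together with $\|v\|^2=1$ (so that $(x,v)v\in L$ with an integer coefficient). Note also that uniqueness of $r$ is not needed for the statement.
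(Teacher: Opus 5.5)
Your proposal is correct and follows essentially the same argument as the paper. You project via $x\mapsto x-(x,v)v$ to get $L=\Z v\oplus(L\cap v^\perp)$, read off unimodularity of the complement from the block-diagonal Gram matrix $\mathrm{diag}(1,G')$, and iterate, with the rank strictly decreasing.
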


\begin{proof}
If $v\in L$ has norm $1$, then for every $x\in L$ the integrality of $L$
gives $(x,v)\in\Z$, and
\[
  x-(x,v)v\in L\cap v^\perp.
\]
Thus
\[
  L=\Z v\oplus (L\cap v^\perp)
\]
orthogonally.
The Gram matrix is block diagonal with a $1$ in the $\Z v$ block; since $L$
is unimodular, $L\cap v^\perp$ is again an integral unimodular lattice.
Iteration gives the decomposition \eqref{eq:norm-one-splitting}.
This splitting is also recorded by Elkies \cite[p.~643]{ElkiesLongShadows}.
\end{proof}

\begin{proof}[Proof of Theorem~\ref{thm:main}]
Let $L$ have rank $n\leq32$, and apply
Lemma~\ref{lem:norm-one-splitting} to obtain $L\cong\Z^r\oplus M$ with $M$
as above.

If $M=0$, then $L\cong\Z^n$ and equality
$\Theta_L(t)=\Theta_{\Z^n}(t)$ holds for every $t>0$.

If $M\neq0$, then $M$ is either odd or even.
In the odd case, Proposition~\ref{prop:odd-no-one} gives
\[
  \Theta_M(t)<\Theta_{\Z^{\rank(M)}}(t)
  \qquad(t>0).
\]
In the even case, the rank of $M$ is a positive multiple of $8$ (since every
positive-definite even unimodular lattice has rank divisible by $8$) and at
most $32$, so Proposition~\ref{prop:even} gives the same strict inequality.
Theta series multiply under orthogonal direct sums, and hence
\[
  \Theta_L(t)
  =\Theta_{\Z^r}(t)\Theta_M(t)
  <\Theta_{\Z^r}(t)\Theta_{\Z^{n-r}}(t)
  =\Theta_{\Z^n}(t)
\]
for every $t>0$.

In particular, if equality $\Theta_L(t)=\Theta_{\Z^n}(t)$ holds for any
single $t>0$, then $M=0$ and so $L\cong\Z^n$; and conversely
$L\cong\Z^n$ gives equality for every $t>0$.
This proves the rank $n\leq 32$ assertion of Theorem~\ref{thm:main} along
with its equality clause.

The rank-$40$ even assertion is Proposition~\ref{prop:even} with $m=5$,
applied directly to $L$ itself, since an even lattice contains no vector of
norm $1$.
\end{proof}

\section{Boundary of the Coefficient Method}\label{sec:scope}

The argument succeeds through rank $32$ in the odd case and rank $40$ in the
even case because the relevant spaces of modular forms are small enough that
the lattice-theoretic data we use---norm-$1$ absence, the ADE root system,
and the shadow---controls the first few Fourier coefficients of $\Theta_L$.
At the next rank in each case, a new Fourier coefficient enters that is not
pinned down by these data, and the method would require new input.
This section identifies the obstruction in each case, then discusses what the
rank-$32$ boundary looks like geometrically.

The first odd rank not covered by the result of
Proposition~\ref{prop:odd-no-one} is $33$.
Then $\ell=4$, and the shadow's leading term comes from $j=4$.
Formula \eqref{eq:shadow-leading-coefficient} gives the lower bound
$a_4\geq0$, whereas the theta-ratio contains the term $a_4u^4$ with positive
sign:
\[
  \frac{\Theta_L}{\vt_3^{33}}
  =1-66u+a_2u^2+a_3u^3+a_4u^4.
\]
The rank-$32$ proof used the special fact that the top shadow term was
constant and therefore vanished.
No corresponding upper bound for $a_4$ follows at rank $33$ from root count
and shadow positivity alone.

The first even rank not covered by Proposition~\ref{prop:even} is $48$.
For weights $4m$, the dimension formula is
\[
  \dim \bigl(M_{4m}(\SL_2(\Z))\bigr)=\left\lfloor\frac{m}{3}\right\rfloor+1,
\]
because $4m\not\equiv2\pmod{12}$.
Thus $M_{24}(\SL_2(\Z))$ has dimension $3$:
\[
  M_{24}=\langle E_4^6,\ E_4^3\Delta,\ \Delta^2\rangle.
\]
The root number determines the coefficient of $E_4^3\Delta$, but not the
coefficient of $\Delta^2$; controlling that coefficient, or replacing it
with a different positivity input, is the next finite-dimensional obstruction
for this method.

Beyond the modular-form obstruction, the rank-$32$ boundary marks a
substantive shift in lattice geometry, on two distinct fronts.
First, the population of unimodular lattices in this range becomes enormous:
King's mass formula \cite{King} computes the mass of even unimodular
lattices of rank $32$ with prescribed root system, including the rootless
case, and the resulting count is very large.
Second, within the even-unimodular critical-point problem studied by
Heimendahl et al., rank $32$ is the first rank at which local maxima of
Gaussian energy appear: Heimendahl, Marafioti, Thiemeyer, Vallentin, and
Zimmermann \cite{Heimendahl} studied the critical-point structure of the
Gaussian energy
\[
  E_c(\Lambda)=\sum_{x\in\Lambda\setminus\{0\}}e^{-c\|x\|^2}
\]
on the moduli space of unit-covolume lattices, proving that every even
unimodular lattice through rank $24$ is a critical point of $E_c$ with
explicitly computed Morse index, and exhibiting rank-$32$ even unimodular
lattices that are local maxima of $E_c$ on the moduli space.
Theorem~\ref{thm:main} establishes that no such local maximum exceeds
$\Z^{32}$ globally.
Pushing the present coefficient method to handle the residual cusp
coefficient at rank $48$, and locating the rank-$32$ lattices that realize
specific local maxima of the moduli-space energy, are natural next targets.

\providecommand{\bysame}{\leavevmode\hbox to3em{\hrulefill}\thinspace}
\providecommand{\MR}{\relax\ifhmode\unskip\space\fi MR }
\providecommand{\MRhref}[2]{%
  \href{http://www.ams.org/mathscinet-getitem?mr=#1}{#2}
}
\providecommand{\href}[2]{#2}

\end{document}